\DeclareFontFamily{U}{ntxmia}{\skewchar \font =127}
 \DeclareFontShape{U}{ntxmia}{m}{it}{
                        <-> \ntxmath@scaled ntxmia
                      }{}    
                      \DeclareFontShape{U}{ntxmia}{b}{it}{
                        <-> \ntxmath@scaled ntxbmia
                      }{}
\def\NAT@spacechar{~}
\crefname{figure}{figure}{figures}
\crefname{claim}{Claim}{Claims}
\crefname{figure}{Figure}{Figures}
\crefname{claim}{claim}{claims}
\Crefname{figure}{Figure}{Figures}
\Crefname{claim}{Claim}{Claims}
\newtheorem{definition}{Definition}
\newtheorem{claim}{Claim}
\newtheorem{theorem}[definition]{Theorem}
\newtheorem{lemma}[definition]{Lemma}
\newtheorem{conjecture}[definition]{Conjecture}
\newtheorem*{nocremark}{Remark}
\newenvironment{claimproof}{%
\let\origqed=\qedsymbol%
\renewcommand{\qedsymbol}{$\blacktriangleleft$}%
\begin{proof}}{\end{proof}\let\qedsymbol=\origqed}
\numberwithin{equation}{section}
\renewcommand{\binom}[2]{\ensuremath{\mleft(\kern-.1em\genfrac{}{}{0pt}{}{#1}{#2}\kern-.1em\mright)}}    
\newcommand{\inbinom}[2]{\ensuremath{\bigl(\kern-.1em\genfrac{}{}{0pt}{}{#1}{#2}\kern-.1em\bigr)}} 
\newcommand*\nume{\ensuremath{\mathrm{e}}}
\def\moverlay{\mathpalette\mov@rlay}
\def\mov@rlay#1#2{\leavevmode\vtop{%
  \baselineskip\z@skip \lineskiplimit-\maxdimen
  \ialign{\hfil$\m@th#1##$\hfil\cr#2\crcr}}}
\newcommand{\charfusion}[3][\mathord]{
    #1{\ifx#1\mathop\vphantom{#2}\fi
        \mathpalette\mov@rlay{#2\cr#3}
      }
    \ifx#1\mathop\expandafter\displaylimits\fi}
\newcommand{\bigO}{\ensuremath{\mathcal{O}}}
\newcommand{\COMMENT}[1]{}
\title{Hamiltonicity of graphs perturbed by a random geometric graph}
\author[A.~Espuny D\'iaz]{Alberto Espuny D\'iaz}
\email{alberto.espuny-diaz@tu-ilmenau.de}
\address{Institut f\"ur Mathematik, Technische Universit\"at Ilmenau, 98684 Ilmenau, Germany.}
\thanks{This research has been partially supported by the Carl Zeiss Foundation.}
\date{\today}
\begin{document}

\begin{abstract}
We study Hamiltonicity in graphs obtained as the union of a deterministic \mbox{$n$-vertex} graph $H$ with linear degrees and a $d$-dimensional random geometric graph $G^d(n,r)$, for any $d\geq1$.
We obtain an asymptotically optimal bound on the minimum $r$ for which a.a.s.~$H\cup G^d(n,r)$ is Hamiltonian.
Our proof provides a linear time algorithm to find a Hamilton cycle in such graphs.

\end{abstract}

\maketitle

\section{Introduction}

Randomly perturbed graphs are one of the thriving areas in the study of random combinatorial structures, and many interesting results in this field have been proved in recent years.
The main general goal in this area is to study properties of graphs which are obtained as the union of a deterministic graph $H$ (usually with a minimum degree condition) and a random graph $G$, particularly when $H$ does not satisfy the property of interest and $G$ is unlikely to.

Research in this direction sparked off with the work of \citet{BFM03}, who studied Hamiltonicity (that is, the property of containing a cycle which covers every vertex of a graph) in $H\cup G_{n,p}$, where $H$ is an $n$-vertex graph with minimum degree at least $\alpha n$, for some $\alpha\in(0,1/2)$ (if $\alpha\geq1/2$, then Dirac's theorem guarantees that $H$ contains a Hamilton cycle), and $G_{n,p}$ is the binomial random graph where each pair of vertices forms an edge independently with probability~$p$.
They showed that, whenever $p\geq C(\alpha)/n$, asymptotically almost surely (a.a.s.) $H\cup G_{n,p}$ is Hamiltonian, improving on the threshold for Hamiltonicity in $G_{n,p}$ by a logarithmic factor.
Since then, Hamiltonicity has also been considered in randomly perturbed directed graphs~\cite{BFM03,KKS16}, hypergraphs~\cite{KKS16,MM18,HZ20} and subgraphs of the hypercube~\cite{CEGKO20}.
Many other properties have been considered as well (e.g., powers of Hamilton cycles~\cite{BMPP20,DRRS20,ADRRS21,NT21}, \mbox{$F$-factors}~\cite{BTW19,HMT19+,BPSS20,BPSS21}, spanning trees~\cite{KKS17,BHKMPP19,JK19} or general bounded degree spanning graphs~\cite{BMPP20}), and in most cases significant improvements on the probability threshold have been achieved.
To the best of our knowledge, all of these results consider (hyper/di)graphs perturbed by a binomial random structure, such as $G_{n,p}$, or its $G_{n,m}$ counterpart.
Only very recently, \citet{EG21} considered Hamiltonicity in graphs perturbed by a random regular graph.

In this paper we consider graphs perturbed by a random geometric graph.
We consider (labelled) random geometric graphs in $d$ dimensions, which are defined as follows.
Let $V\coloneqq\{1,\ldots,n\}$, and let $X_1,\ldots,X_n$ be $n$ independent uniform random variables on $[0,1]^d$.
Then, consider a positive real number $r$ and let $E\coloneqq\{\{i,j\}:\lVert X_i-X_j\rVert\leq r\}$, where $\lVert\cdot\rVert$ denotes the Euclidean norm.
The resulting graph $(V,E)$ is denoted by $G^d(n,r)$.

Hamitonicity of random geometric graphs is fairly well understood.
In dimension $2$, Díaz, Mitsche and Pérez-Giménez \cite{DMP07} determined that the sharp threshold for Hamiltonicity is \mbox{$r^*\!\coloneqq\!(\log n/(\pi n))^{1/2}$} (which means that, for all $\epsilon>0$, if $r\geq(1+\epsilon)r^*$, then a.a.s.~$G^2(n,r)$ is Hamiltonian, and if $r\leq(1-\epsilon)r^*$, then a.a.s.~$G^2(n,r)$ is not Hamiltonian), where $\log$ stands for the natural logarithm.
This result was later strengthened by \citet{BBKMW11} and, independently, \citet{MPW11}, who also extended their results to higher dimensions, proving that, for each positive integer $d\geq2$, there exists a constant $c_d$ such that the sharp threshold for Hamiltonicity in $G^d(n,r)$ is $(c_d\log n/n)^{1/d}$.
(These two papers actually prove a stronger ``hitting time'' result.)
The same statement is true for $d=1$, which is a special case (see~\cite[Section~13.1]{Pen03} and the references therein, together with a remark in the introduction of~\cite{BBKMW11}).
All of these results also extend to other $\ell_p$ norms, $1\leq p\leq\infty$, the only difference being the value of the constant $c_d=c(d,p)$.
For more information about random geometric graphs, we recommend the works of \citet{Pen03,Pen16}.

Our goal here is to prove the following result.

\begin{theorem}\label{thm:main}
For every integer $d\geq1$ and $\alpha\in(0,1/2)$, there exists a constant $C$ such that the following holds for any function $r=r(n)$ with $r\geq(C/n)^{1/d}$.
Let $H$ be an $n$-vertex graph with minimum degree at least $\alpha n$.
Then, a.a.s.\ $H\cup G^d(n,r)$ is Hamiltonian.
\end{theorem}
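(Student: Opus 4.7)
The plan is to prove the theorem by an explicit construction: tessellate $[0,1]^d$ into small cubes, build a long path in $G^d(n,r)$ by traversing the cubes in a snake ordering, and then use the minimum degree of $H$ both to merge any resulting subpaths and to absorb the small set of leftover vertices, finally closing into a Hamilton cycle. This construction should also yield the advertised linear-time algorithm.

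More concretely, I would choose cubes of side length $s := r/(2\sqrt{d})$ so that any two vertices lying in the same cube or in two geometrically adjacent cubes are adjacent in $G^d(n,r)$. There are $\Theta(n/C)$ cubes, each containing $\Theta(C)$ vertices in expectation; Chernoff bounds combined with a union bound over cubes show that, for $C$ sufficiently large, a.a.s.\ only a small fraction $\varepsilon(C)\to 0$ of cubes are ``bad'' (empty or overloaded). Traversing the cubes in a snake ordering of the $d$-dimensional cube lattice and, within each good cube, listing all its vertices in any order, we obtain a collection of vertex-disjoint paths in $G^d(n,r)$ whose union covers all but at most $\varepsilon(C)n$ leftover vertices $U$. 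In the second phase, I would use $H$ to merge these paths and absorb $U$: for each $u\in U$, a double-counting argument using $\deg_H(u)\geq \alpha n$ shows that $\Omega(n)$ consecutive pairs $(x,y)$ along the current path structure satisfy $\{ux,uy\}\subseteq E(H)$, so the detour $x\to u\to y$ can be inserted in place of the edge $xy$. Greedy insertion handles all of $U$, and a final $H$-edge closes the resulting path into a Hamilton cycle.

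The main obstacle is the bridging between consecutive good cubes when the snake traversal crosses a run of bad cubes: in such cases no $G^d(n,r)$-edge need exist between the two sides of the run, so the geometric construction may produce several vertex-disjoint paths rather than one. I would resolve this by using $H$-edges between path endpoints in the same spirit as the absorption above, treating each unmerged pair of adjacent paths as a local obstruction to be fixed; the key fact is that $H$'s minimum-degree condition provides enough ``compatible'' endpoint pairs to realise all required merges, even if the number of paths is linear in $n$, provided $C$ is taken large enough (in terms of $\alpha$ and $d$) so that the total number of operations is at most $\varepsilon(C)n$. The most delicate bookkeeping is to ensure that merging and absorbing do not exhaust the pool of candidate $H$-edges, which I would control by processing the bad cubes and leftover vertices in a randomised order and by showing that each individual operation consumes only a $o(1)$ fraction of the available absorbers.
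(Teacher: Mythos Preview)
Your overall strategy---tessellate, build long paths in the geometric graph through good cells, then use $H$ to merge and absorb---is the same as the paper's. The genuine gap is the absorption step. You claim that ``a double-counting argument using $\deg_H(u)\geq \alpha n$'' gives $\Omega(n)$ consecutive path-pairs $(x,y)$ with $ux,uy\in E(H)$, but no such deterministic argument exists: with $H$ the complete bipartite graph on parts $A,B$ of sizes $\alpha n,(1-\alpha)n$, a path that happens to alternate $A$ and $B$ has \emph{zero} consecutive pairs inside $N_H(u)=A$ for any $u\in B$. What makes the claim true is that the positions are random and independent of $H$: for each fixed pair $u,v$, the $\geq\alpha n$ neighbours of $u$ (and of $v$) are scattered uniformly among the $\Theta(n/C)$ cells, and an Azuma argument (not double counting) shows that a.a.s.\ all but an $\mathrm{e}^{-\Theta(\alpha C)}$-fraction of cells contain a neighbour of each. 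This is precisely the paper's key lemma on ``$\{u,v\}$-linked'' cells (Claim~1(ii)), and it is the heart of the proof; one then takes a union bound over all $O(n^2)$ relevant pairs. The same gap recurs at ``a final $H$-edge closes the resulting path into a Hamilton cycle'' (the two endpoints have no reason to be $H$-adjacent) and at ``$H$-edges between path endpoints'' for merging.

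Two further differences are worth noting. First, the paper reverses your order of operations: it selects all absorbing edges \emph{before} building the cycle---for each sparse cell or component-join it picks one target dense cell $c'$, marks a specific edge $x_cy_c\subseteq c'$, and adds $c'$ to a forbidden set so no cell is reused---and only then constructs the cycle so as to contain every marked edge, which is trivial since each cell induces a clique and carries at most one marked edge. This removes your ``delicate bookkeeping'' and the randomised processing order entirely. Second, rather than a global snake, the paper traverses a spanning tree of each connected component of dense cells; this immediately yields a \emph{cycle} per component and avoids the ``runs of bad cubes'' issue, so that $H$ is used only to join the (few) components and to absorb the sparse-cell vertices.
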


This result is asymptotically best possible, up to the constant factor (which we will make no effort to optimise).
The rest of the paper is organised as follows.
In \cref{section2}, we set our notation and also introduce a probabilistic tool which will be important for our proof.
We then prove \cref{thm:main} in \cref{section3}.
Finally, in \cref{section4} we make several observations about our result, as well as some extensions that follow from its proof.

\section{Preliminaries}\label{section2}

\subsection{Notation}

For any integer $n$, we write $[n]\coloneqq\{i\in\mathbb{Z}:1\leq i\leq n\}$.
If we write parameters in a hierarchy, we assume they are chosen from right to left.
To be more precise, whenever we write $0<a\ll b\leq 1$, we mean that there exists an unspecified, non-decreasing function $f\colon\mathbb{R}\to\mathbb{R}$ such that the ensuing claim holds for all $0<b\leq 1$ and for all $0<a\leq f(b)$.
This can be immediately generalised to longer hierarchies, and also to hierarchies where one parameter may depend on two or more other parameters.
We say that a sequence of events $\{\mathcal{E}_i\}_{i\in\mathbb{N}}$ holds \emph{asymptotically almost surely} (a.a.s.) if $\mathbb{P}[\mathcal{E}_i]\to1$ as $i\to\infty$.
In all asymptotic statements, we will ignore rounding issues whenever these do not affect the arguments.

Most of our graph theoretic notation is standard.
Given a graph $G$, we use $V(G)$ and $E(G)$ to denote its vertex set and edge set, respectively.
We always consider labelled graphs, meaning that whenever we say that $G$ is an $n$-vertex graph we may implicitly assume that $V(G)=[n]$.
If $G$ is a geometric graph (meaning here that each of its vertices is assigned to a position in $\mathbb{R}^d$ for some integer $d$), then $V(G)$ may interchangeably be used to refer to the set of positions to which the vertices of $G$ are assigned, and similarly the notation $v$ may refer to a vertex or to its position.
If $e=\{u,v\}$ is an edge, we usually abbreviate it as $e=uv$.
Given any vertex $v\in V(G)$, we define $N_G(v)\coloneqq\{u\in V(G):uv\in E(G)\}$, and $d_G(v)\coloneqq|N_G(v)|$ is its \emph{degree}.
We denote the minimum and maximum vertex degrees of $G$ by $\delta(G)$ and $\Delta(G)$, respectively.
Given a graph $G$ and a set of vertices $A\subseteq V(G)$, we denote by $G[A]$ the graph on vertex set $A$ whose edges are all edges of $G$ which have both endpoints in $A$.
A \emph{path} $P$ is a graph whose vertices can be labelled in such a way that $E(P)=\{v_iv_{i+1}:i\in[|V(P)|-1]\}$.
If the endpoints of a path (the first and last vertices in the labelling described above) are~$u$ and~$v$, we sometimes refer to it as a $(u,v)$-path.
Given a $(u,v)$-path~$P$ and a $(v,w)$-path~$P'$ such that $V(P)\cap V(P')=\{v\}$, we write~$PP'$ to denote the path obtained by concatenating $P$ and $P'$ (formally, this is the union graph of~$P$ and~$P'$).
If $P'$ is a single edge $vw$, we will write this as $Pw$.
Multiple concatenations will be written in the same way.

\subsection{Azuma's inequality}

Let $\Omega$ be an arbitrary set (in our case, we will take $\Omega=[0,1]^d$), and let $f\colon \Omega^n\to\mathbb{R}$ be some function.
We say that $f$ is $L$-Lipschitz, for some positive $L\in\mathbb{R}$, if, for all $x,y\in\Omega^n$ such that $x$ and $y$ are identical in all but one coordinate, we have that $|f(x)-f(y)|\leq L$.
The following consequence of Azuma's inequality (see, for instance, the book of \citet[Corollary~2.27]{JLR}) will be useful for bounding the deviations of certain random variables.

\begin{lemma}\label{lem:Azuma}
Let $X_1,\ldots,X_n$ be independent random variables taking values in a set $\Omega$, and let $f\colon\Omega^n\to\mathbb{R}$ be an $L$-Lipschitz function.
Then, for any $t\geq0$, the random variable $X\coloneqq f(X_1,\ldots,X_n)$ satisfies that
\[\mathbb{P}[X\geq\mathbb{E}[X]+t]\leq \nume^{-\frac{t^2}{2L^2n}}\qquad\text{ and }\qquad\mathbb{P}[X\leq\mathbb{E}[X]-t]\leq \nume^{-\frac{t^2}{2L^2n}}.\]
\end{lemma}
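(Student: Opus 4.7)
The plan is to derive both tail inequalities from the standard Azuma--Hoeffding martingale inequality via the Doob martingale construction. Concretely, I would set $Z_0\coloneqq\mathbb{E}[X]$ and, for each $i\in[n]$, define $Z_i\coloneqq\mathbb{E}[X\mid X_1,\ldots,X_i]$, so that $Z_n=X$ and $(Z_i)_{i=0}^n$ is a martingale with respect to the filtration generated by $X_1,\ldots,X_n$. The target bound will then follow by applying Azuma--Hoeffding (which I would quote as a black box, e.g.\ from the same reference used in the paper) to this martingale, provided I can show it has increments uniformly bounded by $L$.

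The key step, and the only nontrivial one, is verifying that $|Z_i-Z_{i-1}|\leq L$ almost surely for every $i\in[n]$. This is where the independence of the $X_j$ is used crucially. I would condition on $X_1,\ldots,X_{i-1}$ and write $Z_i$ as an integral of $f$ against the joint distribution of $X_{i+1},\ldots,X_n$ (with $X_i$ fixed to its realised value), and $Z_{i-1}$ as an integral of the same quantity but additionally averaged over an independent copy $X_i'$ of $X_i$. Coupling the two expressions so that they share the randomness in $X_{i+1},\ldots,X_n$, the $L$-Lipschitz hypothesis implies that the two integrands differ pointwise by at most $L$ (since they only differ in the $i$-th coordinate), and hence so do their expectations.

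Once the increment bound is established, Azuma--Hoeffding applied to $(Z_i-Z_0)_{i=0}^n$ with constant difference bound~$L$ yields
\[\mathbb{P}[Z_n-Z_0\geq t]\leq \exp\!\left(-\frac{t^2}{2L^2n}\right),\]
which is precisely the upper-tail statement. The lower-tail bound follows by the same argument applied to $-f$, which is also $L$-Lipschitz. The main (mild) obstacle is really just the bounded-differences verification; after that, the proof is purely a citation of Azuma--Hoeffding.
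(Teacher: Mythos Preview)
Your proposal is correct and is indeed the standard derivation of the bounded-differences (McDiarmid) inequality from Azuma--Hoeffding via the Doob martingale; the increment bound $|Z_i-Z_{i-1}|\leq L$ is exactly where independence and the $L$-Lipschitz hypothesis are used, and your coupling argument is the right way to see it.

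However, you should be aware that the paper does not actually prove this lemma at all: it is stated as a black-box tool with a reference to \citet[Corollary~2.27]{JLR}. So there is no ``paper's own proof'' to compare against; your sketch simply supplies the standard argument that the cited reference contains. If your goal is to match the paper, the appropriate response is just to cite the result rather than reprove it.
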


\section{Proof of \texorpdfstring{\cref{thm:main}}{Theorem \ref{thm:main}}}\label{section3}

Let $0<1/n_0\ll1/C\ll1/d,\alpha\leq1$, where $d\in\mathbb{N}$\COMMENT{We may also assume $\alpha<1/2$, as otherwise, the result holds by Dirac's theorem.}.
Throughout, we assume that $n\geq n_0$ is a natural number.
Let $r\coloneqq(C/n)^{1/d}$, $s\coloneqq1/\lceil2\sqrt{d}/r\rceil$ and $K\coloneqq s^dn$.
Our choice of~$C$ ensures that~$K$ is sufficiently large for all the ensuing claims to hold.
(On an intuitive level, we may replace the hierarchy above by $0<1/n\ll1/K\ll1/d,\alpha\leq1$, and may think of $K$ as essentially interchangeable with $C$.)
Tessellate $[0,1]^d$ using \mbox{$d$-dimensional} hypercubes of side~$s$\COMMENT{So there are $s^{-d}$ cells.} (intuitively, $s$ is close to $r/(2\sqrt{d})$, but possibly slightly smaller to guarantee that the tessellation above exists).
We refer to each of the smaller $d$-dimensional hypercubes as a \emph{cell}, and denote the set of all cells as $\mathcal{C}$.
Observe that $|\mathcal{C}|=s^{-d}=n/K$.
We say that two cells $c_1,c_2\in\mathcal{C}$ are \emph{friends} if their boundaries intersect (in particular, if the boundaries share a single point, they count as intersecting).
It follows that each cell is friends with at most $3^d-1$ other cells.
Given any set of points $S\subseteq[0,1]^d$, we say that a cell is \emph{sparse} with respect to~$S$ if it contains at most~$R\coloneqq2\cdot3^d$ points, and we call it \emph{dense} with respect to~$S$ otherwise.

Consider a labelling of the vertices of $V(H)$ as $v_1,\ldots,v_n$, and let $X_1,\ldots,X_n$ be independent uniform random variables on $[0,1]^d$.
We may assume that none of the variables take their values in the boundary of any of the cells, as this is an event that occurs with probability $1$.
Consider the random geometric graph $G=G^d(n,r)$ on the vertex set of $H$ obtained by assigning position $X_i$ to $v_i$.
Note that, by the definition of~$s$, if some $v\in V(H)$ lies in a cell $c\in\mathcal{C}$, then it is joined by an edge of $G$ to all other vertices in~$c$, as well as to all vertices in cells which are friends of~$c$\COMMENT{Given a fixed cell, its friends are those which are incident to it (i.e., their boundaries intersect in some way, corners are enough).
Thus, it suffices to have that two diagonals of cells are at most $r$ (the worst case would be given when we want to see if a vertex in a corner of a cell is connected to a vertex in the furthest corner of a friend which is as far as possible, that is, it only shares a corner with a cell that shares a corner with the first cell).
The diagonal of a cell has length $\sqrt{d}s$, so we want to enforce that $2\sqrt{d}s\leq r$, and this follows by the choice of $s$.}.

Let us provide here a brief sketch of the proof.
When considering the random geometric graph $G^d(n,r)$, one can show that a.a.s.\ ``most'' cells will be dense with respect to the vertex set, and only a small proportion of them will be sparse.
Now remove all sparse cells from consideration and define an auxiliary graph $\Gamma$, whose vertex set is the set of remaining cells, and where two cells are joined by an edge if they are friends.
A simple algorithm allows us to construct a cycle containing all the vertices which lie in cells that form a connected component in $\Gamma$.
Indeed, fix one such component and consider a spanning subtree $T$.
By definition, $T$ has bounded degree.
One can then ``walk'' along the edges of $T$ to visit all cells, using each edge of $T$ exactly twice, and end in the starting cell.
While doing this, and since the edges of $T$ correspond to cells which are friends, one can incorporate the vertices in the cells into a path (where a suitable number of vertices must be chosen each time a cell is visited).
Eventually, this path gets closed into a cycle when the walk returns to the starting cell.
The bound on the maximum degree of $T$ and the definition of sparse cells (with respect to $V(H)$) are crucial for this process to work.
This simple idea has been at the heart of many results about Hamiltonicity in random geometric graphs~\cite{DMP07,BBKMW11,MPW11}.
Since ``most'' cells are dense and we can apply the previous algorithm to each component of $\Gamma$, we can obtain a set of cycles covering almost all vertices.
In order to obtain a Hamilton cycle, all that remains is to join the cycles into a single, longer cycle and to incorporate the missing vertices.
It is for this purpose that we will need to use the edges of $H$ (and also be somewhat careful when choosing the cycles above).
The following definition will be crucial to achieve our goal.

Given any pair of (not necessarily distinct) vertices $u,v\in V(H)$, we say that a cell is \emph{$\{u,v\}$-linked} (with respect to~$H$) if it contains two distinct vertices $x,y\in V(H)\setminus\{u,v\}$ such that $ux,vy\in E(H)$; if it does not contain such a pair, we say that it is \emph{$\{u,v\}$-unlinked}.
In the particular case when $u=v$, we will say that the cell is $v$-linked/unlinked to mean that it is $\{v,v\}$-linked/unlinked.
One can easily see an intuitive reason why this notion is useful.
Say that a cell $c$ is sparse with respect to $V(H)$ and contains two vertices $u$ and $v$, and say we find a $\{u,v\}$-linked cell $c'$ which is dense with respect to $V(H)$.
Then, when ``walking'' through the component of $\Gamma$ containing $c'$, we may actually leave this component to reach $u$, incorporate all vertices in $c$ into the path we are constructing, and then go back from $v$ to $c'$.
This will allow us to incorporate all vertices in sparse cells into the cycles we constructed above.
A similar approach will allow us to join the different cycles.

We now proceed to formally prove \cref{thm:main}.
We begin with the following claim.

\begin{claim}\label{claim:fewsparse}
The following properties hold a.a.s.:
\begin{enumerate}[label=$(\mathrm{\roman*})$]
    \item\label{claim:fewsparse1} The number of cells which are sparse with respect to $V(H)$ is at most $\nume^{-K/2}n$.
    \item\label{claim:fewsparse2} For each pair of (not necessarily distinct) vertices $u,v\in V(H)$, the number of $\{u,v\}$-unlinked cells is at most $4\nume^{-\alpha K/4}n$.
\end{enumerate}
\end{claim}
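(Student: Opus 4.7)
Both parts of the claim follow from the same three-step template: (1) for a single cell $c$ (and, in the second part, for a fixed pair $u,v$), bound the probability that $c$ is ``bad'' by a Chernoff estimate; (2) apply Azuma's inequality (\cref{lem:Azuma}), noting that moving a single coordinate $X_i$ merely relocates the corresponding vertex $v_i$ from one cell to another and hence alters the status of at most two cells, so that the total count of bad cells is a $2$-Lipschitz function of $(X_1,\ldots,X_n)$; (3) in the second part, finish with a union bound over the at most $n^2$ ordered pairs of vertices of $H$.

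For part~(i), the number of vertices of $V(H)$ lying in a fixed cell $c$ is distributed as $\operatorname{Bin}(n,s^d)$ with mean exactly $K$. Since the hierarchy ensures $K\gg R=2\cdot 3^d$, a standard Chernoff lower-tail bound gives $\mathbb{P}[c\text{ is sparse}]\leq\nume^{-K/2}$, where any $R$-dependent multiplicative factor is absorbed by taking $K$ large. Hence the expected number of sparse cells is at most $(n/K)\nume^{-K/2}\leq\nume^{-K/2}n/2$, and \cref{lem:Azuma} with $L=2$ and deviation $t\geq\nume^{-K/2}n/2$ gives a failure probability of order $\exp(-\Omega(\nume^{-K}n))$, which tends to $0$ because $K$ is a fixed constant while $n\to\infty$.

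For part~(ii), fix $u,v\in V(H)$. The structural observation is: if $|N_H(u)\cap c|\geq 2$ and $|N_H(v)\cap c|\geq 1$ (or the symmetric statement with the roles swapped), then one may first pick $y\in N_H(v)\cap c$ and then $x\in(N_H(u)\cap c)\setminus\{y\}$, which certifies that $c$ is $\{u,v\}$-linked. Consequently, $c$ being $\{u,v\}$-unlinked forces $|N_H(u)\cap c|\leq 1$ or $|N_H(v)\cap c|\leq 1$. Each of these two counts is binomial with mean at least $\alpha K-o(1)$, so Chernoff bounds each of the two events by $\nume^{1-\alpha K/2}$, and a union bound yields $\mathbb{P}[c\text{ is }\{u,v\}\text{-unlinked}]\leq 4\nume^{-\alpha K/2}$. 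The expected number of $\{u,v\}$-unlinked cells is therefore at most $(4n/K)\nume^{-\alpha K/2}\leq 2\nume^{-\alpha K/4}n$ (using $K$ large and $\nume^{-\alpha K/2}\leq\nume^{-\alpha K/4}$). Azuma with $L=2$ and deviation $2\nume^{-\alpha K/4}n$ gives a per-pair failure probability of $\exp(-\Omega(\nume^{-\alpha K/2}n))$, which is $o(n^{-2})$ since $K$ is a constant; a union bound over the at most $n^2$ pairs completes the proof.

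\textbf{Main obstacle.} There is no conceptual difficulty; the argument is a routine Chernoff-plus-Azuma computation. The only care required is bookkeeping the constants in the Chernoff exponents so that the $d$-, $R$-, and $1/K$-dependent prefactors are absorbed by the hierarchy $1/K\ll 1/d,\alpha\leq 1$, and noting that, because $K$ is a fixed constant while $n\to\infty$, Azuma's exponential (in $n$) tail bound easily defeats the polynomial union bound over vertex pairs.
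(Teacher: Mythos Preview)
Your proposal is correct. Part~(i) is essentially identical to the paper's argument (the paper computes the binomial tail directly rather than invoking Chernoff, and records the function as $1$-Lipschitz rather than $2$-Lipschitz, but these are cosmetic differences).

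For part~(ii), however, you take a genuinely different and cleaner route. The paper performs a case split on $|N_H(u)\cap N_H(v)|$: if this common neighbourhood $N$ has size at least $\alpha n/2$, it observes that every $\{u,v\}$-unlinked cell is $N$-vacant (contains at most one vertex of $N$); otherwise it works with the two disjoint sets $S\coloneqq N_H(u)\setminus(N_H(v)\cup\{v\})$ and $T\coloneqq N_H(v)\setminus(N_H(u)\cup\{u\})$, each of size at least $\alpha n/2$, and shows that every $\{u,v\}$-unlinked cell is $S$-vacant or $T$-vacant. In each case it applies \cref{lem:Azuma} to the vacancy count. Your observation that ``$\{u,v\}$-unlinked implies $|N_H(u)\cap c|\leq1$ or $|N_H(v)\cap c|\leq1$'' bypasses the case split entirely and is arguably more natural; the paper's detour through $N$, $S$, $T$ buys nothing beyond making the Lipschitz function depend on fewer coordinates. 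One small point you should tidy: the definition of $\{u,v\}$-linked requires $x,y\in V(H)\setminus\{u,v\}$, so your ``pick $y\in N_H(v)\cap c$, then $x\in(N_H(u)\cap c)\setminus\{y\}$'' could in principle select $y=u$ or $x=v$ when $uv\in E(H)$ and $u$ or $v$ lies in $c$. This only affects the at most two cells containing $u$ or $v$, so it is absorbed in the bound, but it is worth a parenthetical remark (or simply replace $N_H(u)$, $N_H(v)$ by $N_H(u)\setminus\{v\}$, $N_H(v)\setminus\{u\}$ throughout).
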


\begin{claimproof}
\ref{claim:fewsparse1}
Let $f\colon([0,1]^d)^n\to\mathbb{Z}_{\geq0}$ be a function which, given a set $S$ of points $x_1,\ldots,x_n\in[0,1]^d$, returns the number of cells which are sparse with respect to $S$.
Note that $f$ is a $1$-Lipschitz function.

For each $v\in V(H)$ and each cell $c$, we have that $\mathbb{P}[v\in c]=s^d$.
Thus, since the variables $X_i$ are independent, for a fixed cell $c$ we have that\COMMENT{Let $R\coloneqq2\cdot3^d$. We have
\begin{align*}
    \mathbb{P}[c\text{ is sparse}]&=\sum_{i=0}^R\binom{n}{i}s^{di}(1-s^d)^{n-i}\leq (R+1)\binom{n}{R}s^{dR}(1-s^d)^{n-R}\leq (R+1)\frac{n^R}{R!}\frac{K^R}{n^R}\left(1-\frac{K}{n}\right)^{n-R}\\
    &\leq2R\frac{K^R}{R!}\left(1-\frac{K}{n}\right)^n\leq\frac{2K^R}{(R-1)!}\nume^{-K}\leq K^R\nume^{-K},
\end{align*}
where the first inequality holds since $1/K\ll1/d$, which ensures that the function $\inbinom{n}{x}s^{dx}(1-s^d)^{n-x}$ is increasing in this range; the third holds since $1/n_0\ll1/K\ll1/d$ (so $(1-K/n)^R\geq3/4$), and the next holds by the standard $(1-c/x)^x\leq \nume^{-c}$.}
\begin{align*}
    \mathbb{P}[c\text{ is sparse with respect to }V(H)]&=\sum_{i=0}^{R}\binom{n}{i}s^{di}(1-s^d)^{n-i}\leq(R+1)\binom{n}{R}s^{dR}(1-s^d)^{n-R}\\
    &\leq2R\frac{K^R}{R!}\left(1-\frac{K}{n}\right)^n\leq K^{R}\nume^{-K}.
\end{align*}
(Here the first and third inequalities rely on the hierarchy we established at the beginning of the proof; in particular, we may think of $K$ as being much larger than $R$.
We also use the fact that $s^d=K/n$.)
Let $Y\coloneqq f(X_1,\ldots,X_n)$ be the number of cells which are sparse with respect to $V(H)$, so $\mathbb{E}[Y]\leq K^{R-1}\nume^{-K}n$\COMMENT{Recall the number of cells is $s^{-d}=n/K$.}.
Since $f$ is \mbox{$1$-Lipschitz}, it follows by \cref{lem:Azuma} that\COMMENT{Let $R\coloneqq2\cdot3^d$. The first inequality holds for sufficiently large $K$, as $K^R\ll \nume^{K/2}$. We then have that
\begin{align*}
    \mathbb{P}\left[Y\geq2K^{R-1}\nume^{-K}n\right]&=\mathbb{P}\left[Y\geq K^{R-1}\nume^{-K}n+K^{R-1}\nume^{-K}n\right]\\
    &\leq\mathbb{P}\left[Y\geq\mathbb{E}[Y]+K^{R-1}\nume^{-K}n\right]\leq \nume^{-K^{2R-2}\nume^{-2K}n/2}=\nume^{-\Theta(n)}.
\end{align*}
}
\[\mathbb{P}[Y\geq \nume^{-K/2}n]\leq\mathbb{P}[Y\geq2K^{R-1}\nume^{-K}n]\leq \nume^{-\Theta(n)}.\]

\ref{claim:fewsparse2}
We proceed in a similar way.
For each positive integer $m$, let $g_m\colon([0,1]^d)^m\to\mathbb{Z}_{\geq0}$ be a function which, given a set $S$ of $m$ points $x_1,\ldots,x_m\in[0,1]^d$, returns the number of cells which contain at most one point.
Clearly, $g_m$ is $1$-Lipschitz for every $m\in\mathbb{N}$.
Given any set $V\subseteq V(H)$, we say that a cell is \emph{$V$-vacant} if it contains at most one of the vertices of~$V$.

Fix two (not necessarily distinct) vertices $u,v\in V(H)$, and let $Z$ be the number of $\{u,v\}$-unlinked cells.
Let $N\coloneqq N_H(u)\cap N_H(v)$.
We split the analysis into two cases.

Assume first that $\ell\coloneqq|N|\geq\alpha n/2$.
Let $i_1,\ldots,i_\ell$ be the indices of the vertices of $H$ which lie in~$N$.
It follows that, for a fixed cell $c$\COMMENT{We have that \begin{align*}
    \mathbb{P}[c\text{ is }N\text{-vacant}]&=(1-s^d)^\ell+\ell s^d(1-s^d)^{\ell-1}=\left(1-\frac{K}{n}\right)^\ell+\ell \frac{K}{n}\left(1-\frac{K}{n}\right)^{\ell-1}\\
    &\leq\left(1-\frac{K}{n}\right)^{\alpha n/2}+\frac{\alpha K}{2}\left(1-\frac{K}{n}\right)^{\alpha n/2-1}\leq\left(1+\alpha K\right)\left(1-\frac{K}{n}\right)^{\alpha n/2}\leq 2K\nume^{-\alpha K/2}\leq K\nume^{-\alpha K/4},
\end{align*}
where the first inequality can be proved easily (as $\ell$ is linear, the exponents play the largest role), in the second we use the trivial bound $1-K/n\geq1/2$, and the third uses the usual bound on $(1-1/x)^x$.}\COMMENT{Notice that here we need $\alpha K/4\geq\log K$, as otherwise this does not give us much information.
In particular, we need $K\gg\alpha^{-1}$.
This prevents us from extending our approach (in any meaningful way) to values of $\alpha$ tending to $0$ with $n$.
For instance, the probability that a fixed cell is $\{u,v\}$-unlinked tends to $1$ if, say, $\alpha=\theta(\sqrt{n})$ and we want to take $K$ as in the conjecture, so most cells are bad for us.},
\[\mathbb{P}[c\text{ is }N\text{-vacant}]=(1-s^d)^\ell+\ell s^d(1-s^d)^{\ell-1}\leq K\nume^{-\alpha K/4}.\]
Let $Z'\coloneqq g_\ell(X_{i_1},\ldots,X_{i_\ell})$, so $\mathbb{E}[Z']\leq \nume^{-\alpha K/4}n$.
Note that every $\{u,v\}$-unlinked cell is $N$-vacant, so $Z\leq Z'$.
Since $g_\ell$ is $1$-Lipschitz, by \cref{lem:Azuma} we conclude that\COMMENT{We have that
\begin{align*}
    \mathbb{P}\left[Z\geq2\nume^{-\alpha K/4}n\right]&\leq\mathbb{P}\left[Z'\geq2\nume^{-\alpha K/4}n\right]=\mathbb{P}\left[Z'\geq \nume^{-\alpha K/4}n+\nume^{-\alpha K/4}n\right]\\
    &\leq\mathbb{P}\left[Z'\geq\mathbb{E}[Z']+\nume^{-\alpha K/4}n\right]\leq\exp\left(-\nume^{-\alpha K/2}n/2\right)=\nume^{-\Theta(n)}.
\end{align*}
}
\[\mathbb{P}[Z\geq2\nume^{-\alpha K/4}n]\leq \nume^{-\Theta(n)}.\]

Assume now that $\ell<\alpha n/2$ and let $S\coloneqq N_H(u)\setminus(N_H(v)\cup\{v\})$ and $T\coloneqq N_H(v)\setminus(N_H(u)\cup\{u\})$.
Note that $|S|,|T|\geq\alpha n/2$.
By following the same argument as above, we have that, with probability at least $1-\nume^{-\Theta(n)}$, the number of $S$-vacant cells is at most $2\nume^{-\alpha K/4}n$.
The same holds for the number of $T$-vacant cells.
Note that every $\{u,v\}$-unlinked cell must be $S$-vacant or $T$-vacant.
We thus conclude that 
\[\mathbb{P}[Z\geq4\nume^{-\alpha K/4}n]\leq \nume^{-\Theta(n)}.\]

Finally, the statement holds by a union bound over all pairs of vertices $\{u,v\}\subseteq V(H)$.
\end{claimproof}

Condition on the event that $G$ satisfies the properties of the statement of \cref{claim:fewsparse}, which holds a.a.s.
Let $\mathcal{C}_{\mathrm{s}}$ be the set of cells which are sparse with respect to $V(H)$ (to which we simply refer as ``sparse cells'' from now on), and let $\mathcal{C}_{\mathrm{d}}\coloneqq\mathcal{C}\setminus\mathcal{C}_{\mathrm{s}}$.
We define an auxiliary graph $\Gamma$ with vertex set $\mathcal{C}_{\mathrm{d}}$ where two cells are joined by an edge whenever they are friends.
In particular, $\Delta(\Gamma)\leq 3^d-1$.

\begin{claim}\label{claim:fewcomponents}
The number of connected components of\/ $\Gamma$ is at most $\nume^{-K/3}n$.
\end{claim}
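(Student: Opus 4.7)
The plan is to bound the number of components of $\Gamma$ by comparing it to the friendship graph on \emph{all} cells and tracking how components can proliferate when sparse cells are deleted.

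First I would introduce the auxiliary ``full'' friendship graph $\Gamma^{*}$ on vertex set $\mathcal{C}$, with the same adjacency rule as $\Gamma$. Since the cells form a $d$-dimensional axis-aligned tessellation of $[0,1]^d$ and any two cells sharing even a corner are friends, $\Gamma^{*}$ is clearly connected (one can walk between any two cells by moving along one coordinate at a time, stepping between adjacent cells in the tessellation). Note that $\Gamma$ is exactly the subgraph of $\Gamma^{*}$ induced on $\mathcal{C}_{\mathrm{d}} = \mathcal{C} \setminus \mathcal{C}_{\mathrm{s}}$, and every vertex of $\Gamma^{*}$ has degree at most $3^d - 1$.

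Next I would invoke the following elementary fact: for any graph $G$ and any vertex $v \in V(G)$, the number of connected components of $G - v$ is at most the number of components of $G$ plus $\deg_G(v) - 1$ (the component containing $v$ is lost, and is replaced by at most $\deg_G(v)$ new components among the neighbours of $v$). Applying this iteratively as we delete the cells of $\mathcal{C}_{\mathrm{s}}$ from $\Gamma^{*}$ one at a time, and using $\Delta(\Gamma^{*}) \leq 3^d - 1$, the number of components of $\Gamma$ is at most
\[
1 + (3^d - 1)\,|\mathcal{C}_{\mathrm{s}}|.
\]
Conditioning on the event of \cref{claim:fewsparse}\ref{claim:fewsparse1}, we have $|\mathcal{C}_{\mathrm{s}}| \leq \nume^{-K/2} n$, and therefore the number of components of $\Gamma$ is at most $1 + 3^d \nume^{-K/2} n$.

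Finally, I would close the calculation using the hierarchy $1/K \ll 1/d$. Since $K$ is taken sufficiently large in terms of $d$, we have $3^d \leq \nume^{K/6}$, and hence
\[
1 + 3^d \nume^{-K/2} n \leq \nume^{-K/3} n,
\]
as required. No step in this argument is a real obstacle; the only point to be careful about is that the hierarchy guarantees $K \geq 6 d \log 3$ (and that $1$ is absorbed by the exponentially small factor times $n$), which is precisely why the slack between $\nume^{-K/2}$ and $\nume^{-K/3}$ was built into the statement.
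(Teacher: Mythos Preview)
Your argument is correct and in fact considerably simpler than the paper's. The paper does not use the vertex-deletion observation; instead it splits into two cases. First it bounds the number of ``large'' components (those with at least $n^{1/d}/(4K^{1/d})$ cells) trivially, since there are only $n/K$ cells in total. For ``small'' components it runs a geometric argument: starting from any cell of such a component and walking greedily toward a distant corner of $[0,1]^d$, the walk must terminate before reaching the boundary (by the size bound), and at the terminal step one sees a $2^d$-cube of cells all but one of which are sparse. Hence every small component is adjacent to at least $2^d-1$ sparse cells, while each sparse cell can border at most $3^d-1$ distinct components, so a double count bounds the number of small components by roughly $2^d\nume^{-K/2}n$.

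Your route bypasses all of this geometry by the single graph-theoretic fact that deleting a vertex of degree at most $\Delta$ from a graph increases the number of components by at most $\Delta-1$; iterating over $\mathcal{C}_{\mathrm{s}}$ from the connected full grid $\Gamma^*$ gives the bound $1+(3^d-1)\,|\mathcal{C}_{\mathrm{s}}|$ directly. (Strictly, your own lemma yields $1+(3^d-2)\,|\mathcal{C}_{\mathrm{s}}|$, but the weaker bound you wrote is of course still valid.) This is both shorter and more robust, since it uses nothing about the geometry beyond the degree bound and the connectedness of the grid. The paper's argument, by contrast, squeezes out a marginally better constant in front of $\nume^{-K/2}n$ via the $2^d-1$ lower bound, but this extra precision is never used downstream.
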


\begin{claimproof}
By \cref{claim:fewsparse}\ref{claim:fewsparse1}, there are at most $\nume^{-K/2}n$ sparse cells.
Recall that the total number of cells is $n/K$.
Thus, the number of components of $\Gamma$ of size at least $n^{1/d}/(4K^{1/d})$ is at most $4(n/K)^{1-1/d}\leq\nume^{-K/3}n/2$. 

Now consider any component $\gamma$ of $\Gamma$ with fewer than $n^{1/d}/(4K^{1/d})$ cells.
We claim that the number of sparse cells which are friends with some cell of $\gamma$ is at least $2^d-1$.
Indeed, choose an arbitrary cell $c_0\in V(\gamma)$, and choose a corner of the hypercube $[0,1]^d$ which is at distance at least $1/3$ from $c_0$ in each coordinate direction.
Let us assume, without loss of generality, that said corner is $\{1\}^d$.
Now, for $i\geq0$, we follow an iterative process.
Consider the cube $C_i$ of side length $2s$ having $c_i$ in a corner and growing in all directions towards $\{1\}^d$; this is a union of $2^d$ cells, all of which are friends with each other.
If $C_i$ only contains sparse cells (other than $c_i$), we are done (and stop the iterative process), so assume otherwise.
Then, there must be some dense cell $c_{i+1}\subseteq C_i\setminus c_i$.
This new cell $c_{i+1}$, being a friend of $c_i$, also lies in $\gamma$, and is a translation of $c_i$ by a vector all whose coordinates are non-negative and bounded from above by $s$.
This process keeps going until either we find an index $i$ such that $C_i$ contains $2^d-1$ sparse cells or we reach one of the sides of the hypercube $[0,1]^d$.
However, reaching the side requires at least $1/(3s)=n^{1/d}/(3K^{1/d})$ iterations, which contradicts the assumption on the order of $\gamma$, so our claim must indeed hold.

On the other hand, trivially no sparse cell can be friends with more than $3^d-1$ cells which lie in distinct components of $\Gamma$.
Now, a simple double counting argument guarantees that the number of components of $\Gamma$ with fewer than $n^{1/d}/(4K^{1/d})$ cells is at most $2^d\nume^{-K/2}n\leq \nume^{-K/3}n/2$\COMMENT{Consider a bipartite graph with vertex sets the sparse cells and the components of $\Gamma$, where two are connected if they are friends of each other.
By double counting the number of edges, it follows that the number of components with fewer than $s^{-d}/2$ cells is at most
\[\frac{3^d-1}{2^d-1}\nume^{-K/2}n\leq2^d\nume^{-K/2}n<\nume^{-K/3}n/2,\]
where the first inequality holds since $d\geq1$ and the last inequality holds since $1/K\ll1/d$.
Since there is at most one more component, we are done.}.
Together with the bound in the first paragraph, this completes the proof.
\end{claimproof}


With the statements of \cref{claim:fewsparse,claim:fewcomponents}, we can already begin the main part of the proof.
We begin by setting up some notation.
Let $\mathcal{C}^0\subseteq\mathcal{C}_{\mathrm{s}}$ be the set of all cells which contain no vertices of $V(H)$, and let $\mathcal{C}^1\subseteq\mathcal{C}_{\mathrm{s}}$ be the set of all cells which contain exactly one vertex of $V(H)$.
Let $\mathcal{C}^2\coloneqq\mathcal{C}_{\mathrm{s}}\setminus(\mathcal{C}^0\cup\mathcal{C}^1)$, and $\mathcal{C}^*\coloneqq\mathcal{C}^1\cup\mathcal{C}^2$.
Recall that, for any $v\in V(H)$, we use $v$ to refer both to the vertex and to its position in $[0,1]^d$.
Given any cell $c\in\mathcal{C}_{\mathrm{d}}$, let $\Gamma(c)$ be the connected component of $\Gamma$ which contains~$c$.
Let $\mathcal{F}\coloneqq\mathcal{C}_{\mathrm{s}}$ and  $\mathcal{F}^*\coloneqq\{c\in\mathcal{C}_{\mathrm{d}}:|V(\Gamma(c))|=1\}$.
By \cref{claim:fewsparse}\ref{claim:fewsparse1} and \cref{claim:fewcomponents} we have that $|\mathcal{F}\cup\mathcal{F}^*|\leq2\nume^{-K/3}n$.
Both $\mathcal{F}$ and $\mathcal{F}^*$ constitute sets of ``forbidden'' cells which we will avoid when connecting vertices from different cells via edges of~$H$.
These sets will be updated as we choose edges of $H$ to construct the Hamilton cycle.
Indeed, as we choose edges of $H$, each cell which contains a vertex incident to any of these edges will be added to $\mathcal{F}$, so that the different choices of edges to not interact with each other.
These edges of $H$ are chosen ``from a cell'', and $\mathcal{F}^*$ will contain all dense cells from which we must choose edges in a ``correct'' way (namely, ensuring that our process works), and thus will be avoided when choosing edges of $H$ from other cells.
We claim that, for the rest of the proof, we will always have $\mathcal{C}_{\mathrm{s}}\subseteq\mathcal{F}$ and
\begin{equation}\label{equa:Fbound}
    |\mathcal{F}\cup\mathcal{F}^*|\leq \nume^{-K/6}n,
\end{equation}
and assume so throughout.
This bound will follow from the fact that we update $\mathcal{F}$ and $\mathcal{F}^*$ at most $2\nume^{-K/3}n$ times, and each time the size of their union will increase by at most $3$\COMMENT{Of course, $8\nume^{-K/3}n\leq \nume^{-K/6}n$ because $K$ is large.}.

Consider an auxiliary graph $\Gamma'\coloneqq\Gamma$.
We are first going to modify this graph $\Gamma'$ into a connected graph.
We will update $\Gamma'$ in $t-1$ steps, where $t\leq \nume^{-K/3}n$ is the number of components of~$\Gamma$ (see \cref{claim:fewcomponents}).
In each of these steps, we will add exactly one edge to $\Gamma'$, connecting two of its components.
This auxiliary edge will correspond to a way in which we will later connect the cycles which we will construct in each component; we build the structure necessary for this at the same time as we update~$\Gamma'$.
Our definitions of $\mathcal{F}$ and $\mathcal{F}^*$ are crucial in guaranteeing that the upcoming process can be carried out.
In particular, $\mathcal{F}$ and $\mathcal{F}^*$ will always be disjoint, none of the components of (the current form of)~$\Gamma'$ will be contained in $\mathcal{F}$, and $\mathcal{F}^*$ will always contain at most one cell of each component of~$\Gamma'$.
Given any cell $c\in\mathcal{C}_{\mathrm{d}}$, let $\Gamma'(c)$ denote the connected component of~$\Gamma'$ which contains~$c$.
Initialise a set of vertices $F_{\mathrm{d}}$ and two sets of edges $E_{\mathrm{d}}$ and $E_{\mathrm{d}}^*$ as empty sets.
We proceed as follows.
\begin{enumerate}[label=\arabic*.,start=1]
\item \hypertarget{algostep3}{For} each $i\in[t-1]$, choose a smallest component $\gamma$ of $\Gamma'$, and choose an arbitrary cell $c\in V(\gamma)\setminus\mathcal{F}$ (which exists since $V(\gamma)\nsubseteq\mathcal{F}$).
Let $u_c$ and~$v_c$ be two arbitrary distinct vertices in $c$.
Choose an arbitrary $\{u_c,v_c\}$-linked cell $c'(c)\in\mathcal{C}\setminus(\mathcal{F}\cup\mathcal{F}^*\cup V(\gamma))$; its existence follows from \cref{claim:fewsparse}\ref{claim:fewsparse2}, \eqref{equa:Fbound} and the fact that $\gamma$ is a smallest component of~$\Gamma'$\COMMENT{The fact that $\gamma$ is a smallest component of $\Gamma'$ means that it has at most $s^{-d}/2$ cells.
Then, we are removing from consideration at most $s^{-d}/2+\nume^{-K/6}n+4\nume^{-K/4}n<3s^{-d}/4$ cells, so there are lots of cell to choose from.}.
Add the edge $\{c,c'(c)\}$ to $\Gamma'$.
Let $x_c\in N_H(u_c)\cap c'(c)$ and $y_c\in N_H(v_c)\cap c'(c)$ be two distinct vertices.
Add $u_c$, $v_c$, $x_c$ and $y_c$ to $F_{\mathrm{d}}$, add $u_cx_c$ and $v_cy_c$ to $E_{\mathrm{d}}^*$, and add $u_cv_c$ and $x_cy_c$ to $E_{\mathrm{d}}$.
Then, add $c$ and $c'(c)$ to $\mathcal{F}$ (if $c\in\mathcal{F}^*$, remove it from this set, so that $\mathcal{F}$ and $\mathcal{F}^*$ remain disjoint).
Finally, if $|V(\Gamma'(c))\setminus\mathcal{F}|=1$, add this remaining cell to $\mathcal{F}^*$.
\end{enumerate}
Each iteration of step \hyperlink{algostep3}{1} reduces the number of components by one, so it follows that, after we perform all iterations, $\Gamma'$ is connected.
Moreover, a moment of thought reveals that $\mathcal{F}^*$ must now be empty.

We next define some \emph{absorbing paths} which will be used to incorporate all vertices in sparse cells into a Hamilton cycle.
We define these iteratively in $|\mathcal{C}^*|$ steps.
We proceed as follows.
\begin{enumerate}[label=\arabic*.,start=2]
    \item \hypertarget{algostep1}{For} each $c\in\mathcal{C}^1$, let $v_c$ be the vertex contained in~$c$, and choose an arbitrary $v_c$-linked cell $c'(c)\in\mathcal{C}\setminus\mathcal{F}$; note that such a cell exists by \cref{claim:fewsparse}\ref{claim:fewsparse2} and \eqref{equa:Fbound}\COMMENT{By \eqref{equa:Fbound}, we have that $|\mathcal{C}\setminus(\mathcal{F}\cup\mathcal{F}^*)|\geq s^{-d}-\nume^{-K/6}n\geq 3s^{-d}/4$.
    By \cref{claim:fewsparse}\ref{claim:fewsparse2}, the number of $v_c$-linked cells is at least $s^{-d}-4\nume^{-\alpha K/4}n\geq 3s^{-d}/4$.
    Therefore, there must be some cell which belongs to both sets.
    This same argument will be used repeatedly, implicitly.}.
    Choose two distinct vertices $x_c,y_c\in N_H(v_c)\cap c'(c)$\COMMENT{At least two such vertices exist by definition, since $c'(c)$ is $v_c$-linked.}.
    Note that $e_c\coloneqq x_cy_c\in E(G)$, and define $P_c\coloneqq x_cv_cy_c$.
    Then, add $c'(c)$ to $\mathcal{F}$\COMMENT{The reason for this set is as follows. When choosing edges of $H$, we always choose pairs of edges such that the endpoints form an edge in the cell $c'(c)$. Now, if $c'(c)$ contained endpoints of several pairs of edges of $H$, then it could be that the edges inside the cell do not form a linear forest. In such a case, our proof would not work. To avoid this, we simply make sure that no cell is used for more than a pair of edges of $H$ joining it to other cells.}.
    \item\hypertarget{algostep2}{For} each $c\in\mathcal{C}^2$, let~$u_c$ and~$v_c$ be two arbitrary distinct vertices in~$c$ and choose an arbitrary $\{u_c,v_c\}$-linked cell $c'(c)\in\mathcal{C}\setminus\mathcal{F}$, which again must exist by \cref{claim:fewsparse}\ref{claim:fewsparse2} and \eqref{equa:Fbound}.
    Let \mbox{$x_c\in N_H(u_c)\cap c'(c)$} and $y_c\in N_H(v_c)\cap c'(c)$ be two distinct vertices\COMMENT{so that $u_cx_c,v_cy_c\in E(H)$, which exist by the definition of $\{u_c,v_c\}$-linked cell}, and let $P_c'$ be any $(u_c,v_c)$-path which contains all vertices in~$c$ (recall that such a path exists because $G[V(H)\cap c]$ is a complete graph).
    Let $e_c\coloneqq x_cy_c\in E(G)$ and $P_c\coloneqq x_cP_c'y_c$.
    Then, add $c'(c)$ to $\mathcal{F}$.
\end{enumerate}
Once this process is finished, let $F_{\mathrm{s}}\coloneqq\{x_c,y_c:c\in\mathcal{C}^*\}$ and $E_{\mathrm{s}}\coloneqq\{e_c:c\in\mathcal{C}^*\}$.

Let $F\coloneqq F_{\mathrm{s}}\cup F_{\mathrm{d}}$ and $E\coloneqq E_{\mathrm{s}}\cup E_{\mathrm{d}}$.
Note that, by construction, for each $c\in\mathcal{C}$ we have that $|F\cap c|\leq2$ and $|E(G[V(H)\cap c])\cap E|\leq1$.
We are now ready to construct the Hamilton cycle.
The main step for this is to construct a cycle in each component $\gamma$ of $\Gamma$.
We make sure that these cycles contain all edges of $E$ spanned by the vertices in the cells of the corresponding component.
For each component $\gamma$ of $\Gamma$, we proceed as follows.
\begin{enumerate}[label=\arabic*.,start=4]
\item \hypertarget{algostep4}{Let} $T$ be a spanning tree of $\gamma$.
In particular, $\Delta(T)<3^d$.
Consider an arbitrary traversal of~$T$ which, starting at a given cell, goes through every edge of $T$ twice and ends in the starting cell (this can be given, e.g., by a DFS on $T$ taking any cell $c_0$ as a root).
This traversal takes $m\coloneqq2(|V(\gamma)|-1)$ steps, each step corresponding to an edge of $T$.
We use this traversal to construct a cycle $\mathfrak{C}(\gamma)$ as follows.

Assume the traversal starts in a given cell $c_0$.
Choose a vertex $x_0\in (V(H)\cap c_0)\setminus F$ and let $P_0\coloneqq x_0$; this will be the beginning of a path which we will grow into $\mathfrak{C}(\gamma)$.
For notational purposes, set $V(P_{-1})\coloneqq\varnothing$.
For each $i\in[m]$ we define a path $P_i$ as follows.
Let~$c$ be the current cell in our traversal, and let $x_{i-1}\in (V(H)\cap c)\setminus F$ be the last vertex of $P_{i-1}$.
Let $c'$ be the next cell of the traversal.
Because $c$ and $c'$ are friends, every vertex in $c'$ is joined to every vertex in $c$ by an edge of $G$.
Choose an arbitrary vertex $x_i\in (V(H)\cap c')\setminus(F\cup V(P_{i-1}))$.
If this is the last time that~$c$ is visited in the traversal of~$T$, let $P_i'$ be any path with vertex set $(V(H)\cap c)\setminus V(P_{i-2})$ having $x_{i-1}$ as an endpoint and such that, if the vertices in~$c$ span some edge $e\in E$, then $e\in E(P_i')$, and let $P_i\coloneqq P_{i-1}P_i'x_i$; otherwise, simply let $P_i\coloneqq P_{i-1}x_i$.
To complete the cycle, let $P'$ be an $(x_0,x_m)$-path whose internal vertices are all vertices of $(V(H)\cap c_0)\setminus V(P_{m})$ and such that, if the vertices in~$c_0$ span some edge $e\in E$, then $e\in E(P')$.
We then set $\mathfrak{C}(\gamma)\coloneqq P_m\cup P'$.
Observe that every cell contains at least~$R=2\cdot3^d$ vertices and is visited at most $3^d$ times throughout the traversal; this, together with the fact that no cell spans more than one of the edges of $E$, guarantees that the choices of vertices described throughout the process can always be carried out.
\end{enumerate}

Let $\mathfrak{C}$ denote the graph which is the union of all the cycles constructed in step~\hyperlink{algostep4}{4}.
In particular, $E\subseteq E(\mathfrak{C})$.
We can now combine the cycles into a single cycle spanning all vertices in cells of $\mathcal{C}_{\mathrm{d}}$ by letting $\mathfrak{C}'\coloneqq(\mathfrak{C}\setminus E_{\mathrm{d}})\cup E_{\mathrm{d}}^*$.
In order to complete the proof, for each $c\in\mathcal{C}^*$, replace the edge $e_c\in E(\mathfrak{C}')$ by $P_c$.
\hfill\qed{}

\begin{nocremark}
By using results from percolation theory, for $d\geq2$ one can show that a.a.s.~the graph $\Gamma$ actually contains a ``giant'' component which contains, say, more than $9/10$ of the cells.
This fact can be used instead of \cref{claim:fewcomponents} to streamline our proof, simplifying some of its technical details, such as the need for the set $\mathcal{F}^*$.
Indeed, step~\hyperlink{algostep3}{1} can be avoided entirely: for every cell $c$ which does not lie in this giant component we may find a suitable $\{u,v\}$-linked cell which does, for some $u,v\in V(H)\cap c$, as in steps~\hyperlink{algostep1}{2} and~\hyperlink{algostep2}{3}.
Step~\hyperlink{algostep4}{4} can then be applied only to this giant component.
This approach, however, does not work when $d=1$, as $\Gamma$ will a.a.s.~not contain any component of linear size.
\end{nocremark}

\section{Final remarks}\label{section4}

\Cref{thm:main} is asymptotically best possible in the sense that, for each $\alpha\in(0,1/2)$, if we let $r=(c/n)^{1/d}$ for a sufficiently small constant $c$, then there exist graphs $H$ with minimum degree $\alpha n$ such that $H\cup G^d(n,r)$ is a.a.s.~not Hamiltonian.
Indeed, let $H$ be a complete unbalanced bipartite graph with parts $A$ and $B$ of sizes $\alpha n$ and $(1-\alpha)n$, respectively.
Clearly, if $G^d(n,r)[B]$ contains more than $\alpha n$ isolated vertices, then $H\cup G^d(n,r)$ cannot contain a Hamilton cycle.
Our claim thus follows immediately from the following lemma (where we also allow $\alpha$ to depend on $n$)\COMMENT{
This is an old lemma that works for all values of $\alpha<1/2$:
\begin{lemma}
For every integer $d\geq1$ and $\epsilon\in(0,1)$, there exists a constant $c>0$ such that a.a.s. $G^d(n,(c/n)^{1/d})$ contains at least $(1-\epsilon)n$ isolated vertices.
\end{lemma}
\begin{proof}
Let $0<c\ll1/d,\epsilon$.
Let $f\colon([0,1]^d)^n\to\mathbb{Z}$ be a function that, given a set of $n$ points $x_1,\ldots,x_n\in[0,1]^d$, returns the number of isolated vertices in the geometric graph constructed on these points with radius $(c/n)^{1/d}$.
One can easily check that $f$ is an $L$-Lipschitz function, for some $L=L(d)$\COMMENT{To see this, consider a ball of radius $r$, and partition it according to coordinate hyperplanes into (subsets of) cells of side $r/\sqrt{d}$.
This ensures that, if any two points lie in the same cell, then they form an edge.
Now, this clearly partitions the ball into at most $(2\sqrt{d})^d$ cells, so the pigeonhole principle ensures that there cannot be more than $(2\sqrt{d})^d$ vertices in the ball which are isolated.
Then, clearly, putting a new vertex in the center of this ball will make all these points become non-isolated, and it can make the same number of points become isolated if removed from the centre and placed elsewhere.
Since changing the value of one $X_i$ cannot change the number of isolated vertices by more than that many, it follows that $X$ is $(2\sqrt{d})^d$-Lipschitz.
(Note that it is certainly $L$-Lipschitz for a smaller $L$, which we do not try to optimise.)}.\\
Fix an arbitrary $i\in[n]$.
We are first going to compute the probability that $i$ is isolated in $G=G^d(n,(c/n)^{1/d})$.
This is the probability that no $X_j$ with $j\neq i$ is assigned to a position within distance $r$ of $X_i$, that is,
\[\mathbb{P}[i\text{ is isolated}]\geq(1-V_dc/n)^{n-1}\geq \nume^{-2cV_d}\geq1-2cV_d,\]
where $V_d$ is the volume of the ball of radius $1$ in $d$ dimensions.\\
Let $X\coloneqq f(X_1,\ldots,X_n)$ denote the number of isolated vertices in $G$, so $\mathbb{E}[X]\geq(1-2cV_d)n$.
Since $f$ is an $L$-Lipschitz function, by \cref{lem:Azuma} we conclude that\COMMENT{The fist inequality holds by our choice of $c$. We then have the following:
\[\mathbb{P}[X\leq(1-4cV_d)n]\leq\mathbb{P}[X\leq\mathbb{E}[X]-2cV_dn]\leq \nume^{-\frac{4c^2V_d^2n^2}{2nL^2}}=\nume^{-2c^2V_d^2n/L^2}.\]}
\[\mathbb{P}[X\leq(1-\epsilon)n]\leq\mathbb{P}[X\leq(1-4cV_d)n]\leq \nume^{-\Theta(n)}.\qedhere\]
\end{proof}
}.

\begin{lemma}\label{lem:counterexample}
Let $d\geq1$ be an integer, and let $V_d$ be the volume of the ball of radius $1$ in $d$ dimensions.
If $1/2>\alpha=\alpha(n)=\omega(n^{-1/2})$, then a.a.s.~$G^d(n,(-\log(3\alpha/2)/(2V_dn))^{1/d})$ contains at least $4\alpha n/3$ isolated vertices.
\end{lemma}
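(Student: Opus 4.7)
The plan is to compute the expected number of isolated vertices to leading order and then apply \cref{lem:Azuma} for concentration. Throughout, set $\beta\coloneqq-\log(3\alpha/2)>0$ and note the chosen $r$ satisfies $V_dr^d=\beta/(2n)$.

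First I would bound $\mathbb{P}[i\text{ is isolated in }G^d(n,r)]$ from below for a fixed vertex $i$. Conditional on $X_i$, the probability that a particular $X_j$ ($j\neq i$) falls within distance $r$ is at most $V_dr^d$ (the full volume of the ball, which dominates its intersection with $[0,1]^d$), so by independence
\[\mathbb{P}[i\text{ is isolated}]\geq(1-V_dr^d)^{n-1}=\left(1-\tfrac{\beta}{2n}\right)^{n-1}.\]
Using $\log(1-x)\geq-x-x^2$ for small $x$, I can write $(n-1)\log(1-\beta/(2n))=-\beta/2+O(\beta/n)+O(\beta^2/n)$. The hypothesis $\alpha=\omega(n^{-1/2})$ forces $\beta\leq\log(2n^{1/2}/3)=O(\log n)$, so $\beta^2/n=o(1)$ and consequently $(1-\beta/(2n))^{n-1}=(3\alpha/2)^{1/2}(1-o(1))$. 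Letting $X$ denote the number of isolated vertices, I obtain
\[\Exp[X]\geq(1-o(1))(3\alpha/2)^{1/2}n.\]

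Next I would verify that $X$, viewed as a function $f\colon([0,1]^d)^n\to\mathbb{Z}_{\geq0}$ of the point positions, is $L$-Lipschitz for some $L=L(d)$. Changing $X_k$ to $X_k'$ can change the status of a vertex $i\neq k$ only if $i$ is within distance $r$ of $X_k$ or of $X_k'$. Among vertices within distance $r$ of a given point, any subset that is pairwise non-adjacent has pairwise distance strictly greater than $r$, which by a standard sphere-packing argument is of size at most a constant $L=L(d)$. Hence at most $O_d(1)$ vertices change status when a single coordinate is perturbed.

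Finally, for $\alpha\in(0,1/2)$ an elementary calculation gives $(3\alpha/2)^{1/2}-4\alpha/3\geq c\sqrt{\alpha}$ for an absolute constant $c>0$, so $\Exp[X]-4\alpha n/3\geq(c/2)\sqrt{\alpha}\,n$ for $n$ large. Applying \cref{lem:Azuma} with $t=(c/2)\sqrt{\alpha}\,n$,
\[\mathbb{P}[X<4\alpha n/3]\leq\mathbb{P}[X\leq\Exp[X]-t]\leq\nume^{-t^2/(2L^2n)}=\nume^{-\Omega(\alpha n)},\]
which is $o(1)$ because $\alpha n=\omega(\sqrt{n})\to\infty$. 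This yields the claim. The only mildly delicate point is controlling the lower-order error terms in the expansion of $(1-\beta/(2n))^{n-1}$ uniformly over the allowed range of $\alpha$; the restriction $\alpha=\omega(n^{-1/2})$ is exactly what keeps both this error and the Azuma exponent $\alpha n$ under control.
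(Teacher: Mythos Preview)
Your proof is correct and follows the same architecture as the paper's: lower-bound the isolation probability, deduce a lower bound on $\Exp[X]$, check that the isolated-vertex count is $L(d)$-Lipschitz in the point positions, and apply \cref{lem:Azuma}.

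The one noteworthy difference is that you work harder than necessary on the expectation. The paper uses the crude inequality $1-x\geq\nume^{-2x}$ (valid for small $x>0$) to get
\[
\left(1-\tfrac{\beta}{2n}\right)^{n-1}\geq \nume^{-\beta(n-1)/n}\geq \nume^{-\beta}=3\alpha/2,
\]
so $\Exp[X]\geq 3\alpha n/2$, and then applies Azuma with $t=\alpha n/6$ to obtain a bound $\nume^{-\Theta(\alpha^2 n)}$; this is where the hypothesis $\alpha=\omega(n^{-1/2})$ is actually used. Your sharper asymptotic $(1-\beta/(2n))^{n-1}=(1-o(1))(3\alpha/2)^{1/2}$ is correct and yields a larger gap $\Theta(\sqrt{\alpha}\,n)$ and a stronger Azuma exponent $\Omega(\alpha n)$, but this extra precision is not needed for the lemma as stated. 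In particular, your closing remark that $\alpha=\omega(n^{-1/2})$ is ``exactly'' what controls your error term and your Azuma exponent is a little off: in your version both $\beta^2/n=o(1)$ and $\alpha n\to\infty$ hold under far weaker assumptions on $\alpha$; it is the paper's simpler route, with exponent $\alpha^2 n$, for which the hypothesis is tight.
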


\begin{proof}
Let $f\colon([0,1]^d)^n\to\mathbb{Z}$ be a function that, given a set of $n$ points $x_1,\ldots,x_n\in[0,1]^d$, returns the number of isolated vertices in the geometric graph constructed on these points with radius $(-\log(3\alpha/2)/(2V_dn))^{1/d}$.
One can readily check that $f$ is an $L$-Lipschitz function, for some $L=L(d)$\COMMENT{To see this, consider a ball of radius $r$, and partition it according to coordinate hyperplanes into (subsets of) cells of side $r/\sqrt{d}$.
This ensures that, if any two points lie in the same cell, then they form an edge.
Now, this clearly partitions the ball into at most $(2\sqrt{d})^d$ cells, so the pigeonhole principle ensures that there cannot be more than $(2\sqrt{d})^d$ vertices in the ball which are isolated.
Then, clearly, putting a new vertex in the center of this ball will make all these points become non-isolated, and it can make the same number of points become isolated if removed from the centre and placed elsewhere.
Since changing the value of one $X_i$ cannot change the number of isolated vertices by more than that many, it follows that $X$ is $(2\sqrt{d})^d$-Lipschitz.
(Note that it is certainly $L$-Lipschitz for a smaller $L$, which we do not try to optimise.)}.

Fix an arbitrary $i\in[n]$.
Let $X_1,\ldots,X_n$ be independent uniform random variables on $[0,1]^d$.
We are first going to compute the probability that $i$ is isolated in $G=G^d(n,(-\log(3\alpha/2)/(2V_dn))^{1/d})$.
This is the probability that no $X_j$ with $j\neq i$ is assigned to a position within distance $r$ of $X_i$, that is\COMMENT{To prove the second inequality, we use the fact that, if $x>0$ is sufficiently close to $0$, then $\nume^{-x}\geq 1-x\geq \nume^{-2x}$, to obtain
\[(1+\log(3\alpha/2)/(2n))^{n-1}\geq(\nume^{\log(3\alpha/2)/n})^{n-1}\geq\nume^{\log(3\alpha/2)},\]
where the last inequality holds since $\log(3\alpha/2)<0$.},
\[\mathbb{P}[i\text{ is isolated}]\geq(1+\log(3\alpha/2)/(2n))^{n-1}\geq \nume^{\log(3\alpha/2)}=3\alpha/2.\]

Let $X\coloneqq f(X_1,\ldots,X_n)$ denote the number of isolated vertices in $G$, so $\mathbb{E}[X]\geq3\alpha n/2$.
Since $f$ is an $L$-Lipschitz function, by \cref{lem:Azuma} we conclude that\COMMENT{We have that
\[\mathbb{P}[X\leq\mathbb{E}[X]-\alpha n]\leq \nume^{-\frac{\alpha^2n^2}{72nL^2}}=\nume^{-\alpha^2n/(72L^2)}.\]}
\[\mathbb{P}[X\leq4\alpha n/3]\leq\mathbb{P}[X\leq\mathbb{E}[X]-\alpha n/6]\leq \nume^{-\Theta(\alpha^2n)}=o(1),\]
where the last equality relies on the lower bound on $\alpha$.
\end{proof}

A natural questions arises from \cref{thm:main}: what can we say about Hamiltonicity of randomly perturbed graphs when $\alpha$ is allowed to be a function of $n$ (which tends to $0$)?
This same question was recently considered by \citet{HMMMP21} when the random graph $G$ is binomial, and they extended the result of \citet{BFM03} by proving that $G=G(n,\Theta(-\log\alpha/n))$ is both sufficient and necessary.
In view of \cref{lem:counterexample}, the following seems a natural conjecture (and, if true, it would be best possible up to the value of the constant $C$).

\begin{conjecture}
For every integer $d\geq1$, there exists a constant $C$ such that the following holds.
Let $\alpha=\alpha(n)\in(0,1/2)$, and let $H$ be an $n$-vertex graph with minimum degree at least $\alpha n$.
Then, a.a.s. $H\cup G^d(n,(-C\log\alpha/n)^{1/d})$ is Hamiltonian.
\end{conjecture}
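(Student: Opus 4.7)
The plan is to adapt the cell-tessellation argument from the proof of \cref{thm:main} while introducing an absorption step to overcome the one place where the original proof breaks down when $\alpha$ is allowed to tend to~$0$. Choose the constant $C$ so that, with $r=(-C\log\alpha/n)^{1/d}$, the cell parameter $K\coloneqq s^d n$ satisfies $K=\Theta(\log(1/\alpha))$. Since $K\to\infty$ as $\alpha\to 0$, the analogues of \cref{claim:fewsparse}\ref{claim:fewsparse1} and \cref{claim:fewcomponents} go through essentially verbatim: they only use that $K$ is sufficiently large, and the Azuma tail bounds remain exponentially small in~$n$.

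The fundamental obstacle is \cref{claim:fewsparse}\ref{claim:fewsparse2}. A direct computation shows that the probability that a fixed cell is $\{u,v\}$-unlinked is of order $\nume^{-\alpha K}$, and with $K=\Theta(\log(1/\alpha))$ we get $\nume^{-\alpha K}=\nume^{-\Theta(\alpha\log(1/\alpha))}\to 1$ as $\alpha\to 0$. Thus almost every cell is $\{u,v\}$-unlinked for the pairs $u,v$ we would need to connect, and the tools used in steps~\hyperlink{algostep3}{1}, \hyperlink{algostep1}{2} and~\hyperlink{algostep2}{3} of the proof of \cref{thm:main} to attach vertices of sparse cells and to splice together the cycles in different components of~$\Gamma$ are no longer available.

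To bypass this, I would replace those steps by an \emph{absorption} argument in the spirit of R\"odl, Ruci\'nski and Szemer\'edi. First reserve a random subset $A\subseteq V(H)$ of size $\Theta(n/\log(1/\alpha))$; a first-moment computation shows that a.a.s.\ $H[A]$ has minimum degree $\Omega(\alpha|A|)$ and, for every $v\in V(H)\setminus A$, the set $N_H(v)\cap A$ has size $\Omega(\alpha|A|)$. Next, build a \emph{flexible absorbing path} $P_A$ on~$A$, using edges of $(H\cup G^d(n,r))[A]$, with the property that for every sufficiently small $W\subseteq V(H)\setminus A$ there is a path on $V(P_A)\cup W$ with the same endpoints as $P_A$ and using only edges of $H\cup G^d(n,r)$; such a path can be constructed gadget-by-gadget using the Lov\'asz local lemma, ensuring $\Omega(|A|)$ absorbing gadgets per vertex. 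Finally, on $V(H)\setminus A$ apply step~\hyperlink{algostep4}{4} of the proof of \cref{thm:main} to create near-Hamilton cycles in each component of~$\Gamma$, link these cycles via short paths running through~$A$, and absorb the remaining leftover vertices into~$P_A$.

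The main obstacle is the construction and correctness of $P_A$: the minimum degree condition on $H$ alone does not even guarantee a Hamilton path in $H[A]$, so the absorber must be built using a careful interaction between $H$-edges inside~$A$, $H$-edges between $A$ and $V(H)\setminus A$, and edges of $G^d(n,r)$. The analogous problem for the binomial random graph was resolved in this range by \cite{HMMMP21}, and their argument should provide a template; however, the geometric constraints of $G^d(n,r)$ (edges being highly correlated with the underlying positions) introduce extra difficulties when placing the absorbing gadgets, and a local-lemma calculation adapted to the tessellation structure used in the proof of \cref{thm:main} will be required to control the dependencies.
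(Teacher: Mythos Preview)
The statement you are attempting to prove is stated in the paper as a \emph{conjecture}; the paper does not prove it, and only motivates it via \cref{lem:counterexample} and the analogy with the binomial result of \citet{HMMMP21}. So there is no proof in the paper to compare your attempt against.

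As for the proposal itself: what you have written is a plan of attack, not a proof, and you are candid about this. You correctly locate the unique point where the argument for \cref{thm:main} breaks down---the bound in \cref{claim:fewsparse}\ref{claim:fewsparse2} becomes vacuous once $\alpha K=\Theta(\alpha\log(1/\alpha))\to0$---and absorption is a natural substitute. But the entire weight of the argument now rests on the construction of the flexible path $P_A$, which you explicitly flag as ``the main obstacle'' and do not carry out. In the binomial setting of \cite{HMMMP21}, absorbing gadgets for a vertex $v$ are found by a first-moment/local-lemma argument that relies on the independence of the random edges; in $G^d(n,r)$ the edges are determined by positions, so gadgets that share a cell are perfectly correlated, and you provide no dependency graph or event structure for which a local-lemma computation would go through. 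Your choice $|A|=\Theta(n/\log(1/\alpha))$ is also unmotivated: the leftover from sparse cells and from component-splicing has size at most $\alpha^{\Theta(1)}n$, which is far smaller, yet with your $|A|$ the expected number of $A$-vertices per cell is $\Theta(1)$, so $G^d(n,r)[A]$ has no useful structure on the tessellation. Finally, ``link these cycles via short paths running through $A$'' hides exactly the same difficulty that killed \cref{claim:fewsparse}\ref{claim:fewsparse2}: you need specific $H$-edges from prescribed cells into $A$, and when $\alpha\to0$ these need not exist where you want them. Until the absorber and the linking step are actually built in the geometric model, this remains a reasonable heuristic for an open problem rather than a proof.
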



We also wish to remark upon two features of our proof of \cref{thm:main}.
First, the proof is constructive, meaning that it provides an algorithm to find Hamilton cycles in $H\cup G^d(n,r)$.
In particular, if the properties of \cref{claim:fewsparse} hold (which occurs a.a.s.), it provides a deterministic algorithm that outputs a Hamilton cycle in $H\cup G^d(n,r)$.
Furthermore, observe that, throughout the proof, we actually do not need \cref{claim:fewsparse}\ref{claim:fewsparse2} to hold for every pair of vertices, but only for those that we pick throughout the process, which are only linearly many.
This means that the properties of \cref{claim:fewsparse} can be checked in $\bigO(n^2)$ time, which is linear in the size of $H\cup G^d(n,r)$.
Then, the construction of the Hamilton cycle also takes $\bigO(n^2)$ time.
This follows directly from the proof, and can be checked by retracing the steps.

Second, all throughout the paper we have considered the $\ell_2$ Euclidean norm for simplicity.
Our proof generalises directly to all $\ell_p$ norms, $1\leq p\leq\infty$, by adjusting some of the constants.

Furthermore, we note that, under the same conditions as in the statement of \cref{thm:main}, we can actually show that $H\cup G^d(n,r)$ is pancyclic.
Indeed, our proof can easily be modified for this.
From the Hamilton cycle that we construct, given that it contains many subpaths whose vertices actually form cliques in $G^d(n,r)$, one can iteratively reduce the number of vertices in the cycle.
This can be balanced with also removing some of the paths which correspond to sparse cells, as well as leaves of the auxiliary graph $\Gamma'$, to prove that cycles of all lengths can be constructed.

Finally, we remark that our result opens the door to questions about other spanning structures in dense graphs perturbed by random geometric graphs.
In this direction, \citet{EH22} very recently generalised \cref{thm:main} to deal with powers of Hamilton cycles, showing that, for $k$ independent of $n$, the required radius for the a.a.s.\ containment of the $k$-th power of a Hamilton cycle in this setting is the same (up to constant factors depending on $k$) as that required for Hamiltonicity.
This has a number of direct consequences, such as for questions related to $F$-factors or $2$-universality (which directly implies pancyclicity).

\COMMENT{A simple observation is that our results also hold in the torus. This is trivial since the random geometric graph defined on the torus is always a supergraph of the same geometric graph on the hypercube.}

\section*{Acknowledgement}

I would like to thank Xavier Pérez-Giménez for some very helpful discussions about the topic of this paper.
I am also indebted to anonymous referees for their helpful remarks.

\bibliographystyle{mystyle} 
\bibliography{geopert}


\end{document}